\def\qed{{\hbadness=10000\hfill\ \vbox{\hrule height.09ex
     \hbox{\vrule width.09ex height1.55ex depth.2ex \kern1.8ex
     \vrule width.09ex height1.55ex depth.2ex}\hrule height.09ex}\break
     \bigskip}}
\newtheorem{theorem}{Theorem}[section]
\newtheorem{lemma}[theorem]{Lemma}
\newtheorem{proposition}[theorem]{Proposition}
\newtheorem{definition}[theorem]{Definition}
\newtheorem{remark}[theorem]{Remark}
\newtheorem{example}[theorem]{Example}
\begin{document}
\title{On the Congruences in Right Loops}

\author{Vipul Kakkar*
~ and R.P. Shukla**\\
*School of Mathematics, Harish-Chandra Research Institute\\
Allahabad\\
**Department of Mathematics, University of Allahabad \\
Allahabad (India) 211 002\\
Email: vplkakkar@gmail.com; shuklarp@gmail.com}

\date{}
\maketitle

\begin{abstract}
In case the stability relation is a congruence, a necessary and also a sufficient condition for its equality with the center congruence is given. 
\end{abstract}


\noindent \textbf{Keywords:} Right loop, Normalized Right Transversal, Stability Relation, Center Congruence
\\
\noindent \textbf{2000 Mathematics subject classification:} 20D60; 20N05
\section{Introduction}
Let $G$ be a group and $H$ be a subgroup of $G$. A \textit{normalized right transversal (NRT)} $S$ of $H$ in $G$ is a subset of $G$ obtained by choosing one and only one element from each right coset of $H$ in $G$ and $1 \in S$. Then $S$ has an induced binary operation $\circ$ given by $\{x \circ y\}=Hxy \cap S$, with respect to which $S$ is a right loop with identity $1$, that is, a right quasigroup with both sided identity  (see \cite[Proposition 4.3.3, p.102]{smth},\cite{rltr}). Conversely, every right loop can be embedded as an NRT in a group with some universal property (see \cite[Theorem 3.4, p.76]{rltr}). Let $\langle S \rangle$ be the subgroup of $G$ generated by $S$ and $H_S$ be the subgroup $\langle S \rangle \cap H$. Then  $H_S=\langle \left\{xy(x \circ y)^{-1}|x, y \in S \right\} \rangle$ and $H_{S}S=\langle S \rangle$ (see \cite{rltr}).
Identifying $S$ with the set $H \backslash G$ of all right cosets of $H$ in $G$, we get a transitive permutation representation $\chi_{S}:G\rightarrow Sym(S)$ defined by $\left\{\chi_{S}(g)(x)\right\}=Hxg \cap S, g\in G, x\in S$. The kernal $Ker \chi_S$ of this action is $Core_{G}(H)$, the core of $H$ in $G$.   

Let $G_{S}=\chi_{S}(H_{S})$. This group is known as the \textit{group torsion} of the right loop $S$ (see \cite[Definition 3.1, p.75]{rltr}). The group $G_S$ depends only on the right loop structure $\circ$ on $S$ and not on the subgroup $H$. Since $\chi_S$ is injective on $S$ and if we identify $S$ with $\chi_S(S)$, then $\chi_S(\langle S \rangle)=G_SS$ which also depends only on the right loop $S$ and $S$ is an NRT of $G_S$ in $G_SS$. One can also verify that $ Ker(\chi_S|_{H_SS}: H_SS \rightarrow G_SS)=Ker(\chi_S|_{H_S}: H_S \rightarrow G_S)=Core_{H_SS}(H_S)$ and $\chi_S|_S$=the identity map on $S$. Also $(S, \circ)$ is a group if and only if $G_S$ trivial.

A non-empty subset $T$ of right loop $S$ is called a \textit{right subloop} of $S$, if it is right loop with respect to induced binary operation on $T$ (see \cite[Definition 2.1, p. 2683]{rpsc}). Also an \textit{invariant right subloop} of a right loop $S$ is precisely the equivalence class of the identity of a congruence in $S$ (\cite[Definition 2.8, p. 2689]{rpsc}). It is observed in the proof of \cite[Proposition 2.10, p. 2690]{rpsc} that if $T$ is an invariant right subloop of $S$, then the set $S/T=\{T \circ x|x \in S\}$ becomes right loop called as \textit{quotient of S mod H} and the map $\nu:S\rightarrow S/T$ defined by $\nu(x)=T \circ x$ is a right loop homomorphism. It is also observed in this paper that $\nu:S\rightarrow S/T$ induces a group homomorphism $\tilde\nu:G_SS\rightarrow G_{S/T}S/T$ (see the discussion following \cite[Lemma 2.5, p.2684]{rpsc}).


Let $(S,\circ)$ be a right loop. Let $a \in S$ such that the equation $a \circ X=c$ has unique solution for all $c \in S$, in notation we write it as $X=a \backslash_{\circ} c$. Then the map $L_a^{\circ}:S\rightarrow S$ defined by $L_a^{\circ}(x)=a \circ x$ is bijective map. Such an element $a$ is called a \textit{left non-singular} element of $S$. For $x \in S$, we denote the map $y\mapsto y \circ x$ $(y \in S)$ by $R_x^{\circ}$. We will drop the superscript, if the binary operation is clear.

In the sections \ref{6c2s} and \ref{6c3s}, we discuss the central congruence and stability relation $\sigma(S)$ on a right loop $S$. In case $S$ is a loop, $\sigma(S)$ is a congruence and coincides with the center congruence (see \cite[p. 81; Proposition 3.15, p. 83]{smth1}). However, if $S$ is a right loop but not a loop, then $\sigma(S)$ need not be a congruence (see Example \ref{6c3se1}). We have also shown that even if $\sigma(S)$ is a congruence on a right loop, it need not coincide with the central congruence (see Example \ref{6c3se2}). In case, $\sigma(S)$ is a congruence, a necessary and a sufficient condition for its equality with the center congruence is given (Theorem \ref{6c3st1}). In the last section, we have given an example of a right loop which is simple, however the group $G_SS$ is not quasiprimitive on $S$. This example corrects \cite[Theorem 4, p. 474]{phsmth} and shows that it is one directional only.    

\section{Centering Congruence in a Right Loop}\label{6c2s}

We note that a right loop $(S,\circ)$ contains two binary operations $\circ$ and $/$, where $x/y$ is the unique solution of the equation $X \circ y=x$, one nullary operation $e$ defined by $e(x)=1$ for all $x \in S$ and one unary operation $l$ defined by $l(x)=x^{\prime}$, where $1$ denotes the identity of $S$ and $a^{\prime}$ denotes the left inverse of $a$ in the right loop $S$. One can prove that $x/y=x \theta f(y^{\prime},y)^{-1} \circ y^{\prime}$. These operations satisfies following conditions: \[(y/x) \circ x=x,(x \circ y)/y=x,1\circ x=x=x \circ 1,x/x=1,x/1=x.\] One observes that a right loop $S$ is a universal algebra $(S,\circ,/,1)$. Define a ternary operation on $S$ by $P(x,y,z)=(x/y) \circ z$. One can note that $P(x,y,y)=x$ and $P(x,x,y)=y$. The operation $P(x,y,z)$ is called as the \textit{Mal'cev operation} (see \cite[p. 19]{smthmv}).

Let $H$ be an invariant right subloop of a right loop $(S,\circ)$ and $\alpha$ be the congruence on $S$ determined by $H$. One can observe that for $(1 \leq i \leq 3)$, $(x_i,y_i) \in \alpha$ $\Rightarrow (P(x_1,x_2,x_3),P(y_1,y_2,y_3)) \in \alpha$.  Now, we have following proposition:

\begin{proposition}\label{6c2sp1}(\cite[Proposition 4.3.2, p.101]{smth1}) Let $(S,\circ)$ be a right loop. Then
\begin{enumerate}
\item[(i)] The congruences on $S$ are permutable, that is if $\alpha$ and $\beta$ are two congruences on a right loop $S$, then $\alpha \circ \beta=\beta \circ \alpha$.
\item[(ii)] A right subloop $\alpha$ of $S \times S$ is a congruence on $S$ if and only if it contains the diagonal right subloop $\Delta(S)=\{(x,x)|x\in S\}$ of $S \times S$.

\end{enumerate}
\end{proposition}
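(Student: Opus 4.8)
The plan is to derive both parts from the single fact noted just before the statement: every congruence on $S$ is compatible with the Mal'cev operation $P(x,y,z)=(x/y)\circ z$, together with the two defining identities $P(x,y,y)=x$ and $P(x,x,y)=y$. In other words, I would treat this as the familiar Mal'cev phenomenon, where possession of such a ternary term forces congruence permutability and simultaneously characterizes which subalgebras of $S\times S$ are congruences.

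For part (i), I would prove $\alpha\circ\beta\subseteq\beta\circ\alpha$ and then invoke symmetry in $\alpha,\beta$. So suppose $(a,c)\in\alpha\circ\beta$, witnessed by some $b$ with $a\mathrel{\alpha}b$ and $b\mathrel{\beta}c$, and set $d=P(a,b,c)$. Applying compatibility of $\alpha$ with $P$ to the first coordinate (where $a\mathrel{\alpha}b$) and reflexivity in the other two gives $d=P(a,b,c)\mathrel{\alpha}P(b,b,c)=c$; applying compatibility of $\beta$ to the third coordinate (where $b\mathrel{\beta}c$) and reflexivity elsewhere gives $a=P(a,b,b)\mathrel{\beta}P(a,b,c)=d$. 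Hence $a\mathrel{\beta}d\mathrel{\alpha}c$, so $(a,c)\in\beta\circ\alpha$. Interchanging $\alpha$ and $\beta$ gives the reverse inclusion, and therefore $\alpha\circ\beta=\beta\circ\alpha$.

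For part (ii), the forward direction is immediate: a congruence is reflexive, hence contains $\Delta(S)$, and it is by definition closed under $\circ$ and $/$, hence is a right subloop of $S\times S$. For the converse, let $\alpha$ be a right subloop of $S\times S$ with $\Delta(S)\subseteq\alpha$; being a subalgebra of the universal algebra $(S,\circ,/,1)$, it is closed under the componentwise derived operation $P$. Reflexivity is exactly $\Delta(S)\subseteq\alpha$. For symmetry, if $(a,b)\in\alpha$, then applying $P$ componentwise to the triple $(a,a),(a,b),(b,b)$, all of which lie in $\alpha$, yields $\bigl(P(a,a,b),P(a,b,b)\bigr)=(b,a)\in\alpha$. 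For transitivity, if $(a,b),(b,c)\in\alpha$, then applying $P$ to $(a,b),(b,b),(b,c)$ yields $\bigl(P(a,b,b),P(b,b,c)\bigr)=(a,c)\in\alpha$. Thus $\alpha$ is an equivalence relation that is also a subalgebra, i.e. a congruence.

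I do not expect a serious obstacle, since the whole argument rests on the compatibility of congruences with $P$, which is already granted in the excerpt. The only points requiring care are bookkeeping ones: choosing the correct diagonal entries to feed into $P$ so that the identities $P(x,y,y)=x$ and $P(x,x,y)=y$ extract precisely the pairs $(b,a)$ and $(a,c)$, and confirming that ``right subloop of $S\times S$'' is to be read as a subalgebra of $(S,\circ,/,1)$, so that closure under the derived operation $P$ is available componentwise.
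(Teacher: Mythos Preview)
Your argument is correct and is exactly the classical Mal'cev-term proof. The paper does not actually supply its own proof of this proposition: it is quoted from \cite[Proposition 4.3.2, p.101]{smth1}, and the subsequent remark simply points out that the result holds in any Mal'cev variety \cite{smthmv}. Your write-up is precisely that Mal'cev-variety argument specialized to the term $P(x,y,z)=(x/y)\circ z$ set up in the paragraph preceding the proposition, so there is nothing to contrast.
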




\begin{remark}\label{6c2sr1} The above proposition is true for more general class called as \textit{Mal'cev Variety} (see \cite{smthmv}). It is observed by J. D. H. Smith in \cite{smthmv} that the notion of congruence behaves good in the Mal'cev variety.
\end{remark}

 \begin{definition}\label{6c2sd1}(\cite[DEFINITIONS 211, p. 24]{smthmv}) Let $\beta$ and $\gamma$ be congruences on a right loop $S$. Let $(\gamma|\beta)$ be a congruence on $\beta$. Then $\gamma$ is said to \textbf{centralize} $\beta$ by means of the \textbf{centering congruence} \index{congruence!centering congruence} $(\gamma|\beta)$ such that following conditions are satisfied:
 
\begin{enumerate}
	\item[(i)] $(x,y) ~(\gamma|\beta) ~(u,v) \Rightarrow x ~\gamma ~u$, for all $(x,y), (u,v) \in \beta$.
	\item[(ii)] For all $(x,y) \in \beta$, the map $\pi : (\gamma|\beta)_{(x,y)} \rightarrow \gamma_x$ defined by $(u, v)\mapsto u$ is a bijection, where for a set $X$ and an equivalence relation $\delta$ on $X$, $\delta_w$ denotes the equivalence class of $w \in X$ under $\delta$. 
	\item[(iii)] For all $(x,y) \in \gamma$, $(x,x) ~(\gamma|\beta) ~(y,y)$.
	\item[(iv)] $(x,y) ~(\gamma|\beta) ~(u,v)\Rightarrow (y,x) ~(\gamma|\beta) ~(v,u) $, for all $(x,y), (u,v) \in \beta$.
	\item[(v)] $(x,y) ~(\gamma|\beta) ~(u,v)$ and $(y,z) ~(\gamma|\beta) ~(v,w)$ $\Rightarrow (x,z) ~(\gamma|\beta) ~(u,w)$, for all $(x,y), (u,v), (y,z)$ and $(v,w)$ in $\beta$.
\end{enumerate}
  
 \end{definition}
 By $(i)$ and $(iv)$, we observe that $(x,y) ~(\gamma|\beta) ~(u,v) \Rightarrow y ~\gamma ~v$.
 
\noindent An equivalent condition for centralizing congruence is obtained in \cite[COROLLARY 224, p. 37]{smthmv}, which is given as follows:
\begin{proposition}(\cite[COROLLARY 224, p. 37]{smthmv})\label{6c2sp2} Let $\beta$ and $\gamma$ be congruences on a right loop $S$. Then $\gamma$ centralizes $\beta$ by means of $(\gamma|\beta)$ if and only if the following two conditions are satisfied
\begin{enumerate}
	\item[(i)] $(x,y) ~(\gamma|\beta) ~(u,v) \Rightarrow x ~\gamma ~u$, for all $(x,y), (u,v) \in \beta$.
   \item[(ii)] For all $x \in S$, $(x,x) ~(\gamma|\beta) ~(x,y) \Rightarrow x= y$
\end{enumerate}   
\end{proposition}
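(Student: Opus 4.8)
The plan is to prove the two implications of Proposition \ref{6c2sp2} separately. Throughout I will use that $\beta$ is itself a right loop, namely a subalgebra of $S\times S$ under the componentwise operations, that the centering congruence $\Gamma:=(\gamma|\beta)$ is by hypothesis a congruence on $\beta$ and hence is compatible with the componentwise Mal'cev operation $P$ and contains the diagonal $\Delta(\beta)$, and that each right translation $R^{\circ}_u$ of $S$ is a bijection, so that $b/a=1$ holds if and only if $a=b$ (using $x/x=1$ and $(b/a)\circ a=b$). I freely use the identities $P(x,y,y)=x$ and $P(x,x,y)=y$.

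The forward implication is immediate. Condition (i) of Proposition \ref{6c2sp2} is literally condition (i) of Definition \ref{6c2sd1}. For condition (ii) of Proposition \ref{6c2sp2}, suppose $(x,x)~\Gamma~(x,y)$. Since $(x,x)\in\Delta(S)\subseteq\beta$, both $(x,x)$ and $(x,y)$ lie in the fibre $\Gamma_{(x,x)}$, and both are sent to $x$ by the projection $\pi\colon(u,v)\mapsto u$ of Definition \ref{6c2sd1}(ii). As $\pi$ is a bijection, it is injective, so $(x,x)=(x,y)$ and $x=y$.

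The reverse implication is the substance of the statement. Assume conditions (i) and (ii) of Proposition \ref{6c2sp2}. The key step, and the one I expect to be the main obstacle, is to upgrade the non-collapse condition (ii) to the full injectivity demanded by Definition \ref{6c2sd1}(ii), namely $(u,a)~\Gamma~(u,b)\Rightarrow a=b$. I would obtain this by a single Mal'cev combination: feeding the three $\Gamma$-related columns $(u,a)~\Gamma~(u,b)$, $(u,a)~\Gamma~(u,a)$ and $(u,u)~\Gamma~(u,u)$ through $P$ componentwise gives
\[
P\big((u,a),(u,a),(u,u)\big)~\Gamma~P\big((u,b),(u,a),(u,u)\big),
\]
that is, $(u,u)~\Gamma~(u,(b/a)\circ u)$, since $P(a,a,u)=u$ on the left while the right second coordinate is $P(b,a,u)=(b/a)\circ u$. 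Now condition (ii) of Proposition \ref{6c2sp2} forces $u=(b/a)\circ u$; as $R^{\circ}_u$ is injective and $1\circ u=u$, this yields $b/a=1$, hence $a=b$. This is the crux of the argument.

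The remaining conditions are structural and reflect that $(\gamma|\beta)$ is the congruence on $\beta$ generated by the diagonal pairs $\{((x,x),(y,y))\mid (x,y)\in\gamma\}$: condition (iii) records that these generators lie in $\Gamma$, and condition (iv) follows because the coordinate swap $(a,b)\mapsto(b,a)$ is an automorphism of $\beta$ fixing each generator and hence preserving $\Gamma$. Surjectivity of $\pi$ and condition (v) then drop out of (iii) by further applications of $P$: for surjectivity, given $u~\gamma~x$ one applies $P$ to $(u,u)~\Gamma~(x,x)$ together with the reflexive pairs at $(x,x)$ and $(x,y)$ to get $(u,P(u,x,y))~\Gamma~(x,y)$; for (v), from $(x,y)~\Gamma~(u,v)$ and $(y,z)~\Gamma~(v,w)$ one first notes $y~\gamma~v$ (via (i) and (iv)), so $(y,y)~\Gamma~(v,v)$ by (iii), and then applies $P$ to the columns $(x,y),(y,y),(y,z)$ against $(u,v),(v,v),(v,w)$, using $P((x,y),(y,y),(y,z))=(x,z)$ and $P((u,v),(v,v),(v,w))=(u,w)$ to conclude $(x,z)~\Gamma~(u,w)$. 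Thus essentially all of Definition \ref{6c2sd1}(i)--(v) is forced by the congruence structure and the Mal'cev operation, the only genuine constraint being the injectivity encoded in condition (ii) of Proposition \ref{6c2sp2}.
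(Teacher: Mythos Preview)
The paper does not prove this proposition at all; it is quoted verbatim from \cite[Corollary~224]{smthmv} and left uncited. So there is nothing to compare against, and I will simply assess your argument.

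Your forward direction is fine, and your Mal'cev manoeuvre upgrading condition~(ii) to the full injectivity of $\pi$ is exactly right and is indeed the heart of the matter.

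The gap is in the reverse direction, specifically in how you obtain condition~(iii) of Definition~\ref{6c2sd1}. You write that (iii) ``records that these generators lie in $\Gamma$'' and that (iv) follows because the swap automorphism fixes the generators---but this presupposes that $\Gamma=(\gamma|\beta)$ is \emph{the congruence on $\beta$ generated by} $\{((x,x),(y,y)):x\,\gamma\,y\}$, and nothing in the statement as the paper records it says so. As written, $(\gamma|\beta)$ is just some congruence on $\beta$, and conditions~(i) and~(ii) of the proposition do not force~(iii): take $\Gamma=\Delta(\beta)$, the identity congruence on $\beta$. Then (i) holds because $(x,y)\,\Gamma\,(u,v)$ forces $x=u$, and (ii) holds because $(x,x)\,\Gamma\,(x,y)$ forces $x=y$; yet (iii) fails whenever $\gamma\neq\Delta(S)$, and surjectivity of $\pi$ in Definition~\ref{6c2sd1}(ii) fails as well. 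So as literally stated in the paper the biconditional is false, and your argument for (iii) and (iv) does not go through.

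What is happening is that in Smith's book the symbol $(\gamma|\beta)$ denotes a \emph{specific} congruence on $\beta$ (essentially the one generated by the diagonal pairs you name), and Corollary~224 is about that particular object; the paper has suppressed this context when transcribing the statement. Under that reading your sketch is correct: (iii) is built in, (iv) follows from the swap symmetry of the generating set, and your Mal'cev arguments for surjectivity and for~(v) are sound. You should make this hypothesis explicit rather than asserting it mid-proof, and note that the proposition as stated in the paper needs it.
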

Following remark analyzes the centralizing congruence in groups: 
\begin{remark}\label{6c2sr2} Let $G$ be a group. Let $H$ and $K$ be normal subgroups in $G$. Suppose that $K$ centralizes $H$. Then $hk=kh$ for all $k \in K$ and $h \in H$. Let $\beta=\{(x,y) \in G \times G|yx^{-1} \in H\}$ and $\gamma=\{(x,y) \in G \times G|yx^{-1} \in K\}$ be congruences determined by $H$ and $K$ respectively. Define a relation $(\gamma|\beta)$ on $\beta$ by $(x,y) ~(\gamma|\beta) ~(u,v)\Leftrightarrow $ there exists $h \in H$ and $k \in K$ such that $yx^{-1}=vu^{-1}=h$ and $x^{-1}u=k$. One can check that $(\gamma|\beta)$ is a congruence on $\beta$ and $\gamma$ centralizes $\beta$ by means of $(\gamma|\beta)$. One can also see that $(\gamma|\beta)_{(1,1)}=\{(w,w) \in \beta|w \in K\}$. If $K=G$, then $(\gamma|\beta)_{(1,1)}=\Delta(G)$. 
\end{remark} 
Following holds for right loops:
\begin{proposition}(\cite[PROPOSITION 221, p. 34]{smthmv})\label{6c2sp3} Let $S$ be a right loop. Let $\beta$ and $\gamma$ be congruences on $S$ and let $\gamma$ centralizes $\beta$ by means of $(\gamma|\beta)_1$ and $(\gamma|\beta)_2$. Then $(\gamma|\beta)_1=(\gamma|\beta)_2$.
\end{proposition}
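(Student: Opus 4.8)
The plan is to show that \emph{any} centering congruence by which $\gamma$ centralizes $\beta$ is forced to coincide with one explicitly described relation, so that $(\gamma|\beta)_1$ and $(\gamma|\beta)_2$ have no choice but to be equal. Throughout, write $\delta$ for a centering congruence $(\gamma|\beta)$ and recall that $\delta$ is a congruence on the right loop $\beta \le S\times S$, whose fundamental operations are computed componentwise. In particular $\delta$ respects the Mal'cev term $P(x,y,z)=(x/y)\circ z$, which on $\beta$ acts coordinatewise:
\[
P\big((a_1,b_1),(a_2,b_2),(a_3,b_3)\big)=\big(P(a_1,a_2,a_3),\,P(b_1,b_2,b_3)\big).
\]

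The key computation is a single application of this compatibility. First I would fix $(x,y)\in\beta$ and an arbitrary $u\in S$ with $u\,\gamma\,x$, and consider the three input pairs $L_1=R_1=(x,y)$, $L_2=R_2=(x,x)$, $L_3=(x,x)$ and $R_3=(u,u)$. Then $L_1\,\delta\,R_1$ and $L_2\,\delta\,R_2$ hold by reflexivity, while $L_3\,\delta\,R_3$, that is $(x,x)\,\delta\,(u,u)$, holds by condition (iii) of Definition \ref{6c2sd1} since $x\,\gamma\,u$. Using $P(x,x,x)=x$, $P(y,x,x)=y$ and $P(x,x,u)=u$, one computes $P(L_1,L_2,L_3)=(x,y)$ and $P(R_1,R_2,R_3)=(u,\,P(y,x,u))$. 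Since $\delta$ respects $P$ and each $L_i\,\delta\,R_i$, this yields
\[
(x,y)\ \delta\ \big(u,\,P(y,x,u)\big).
\]
Thus, for every $(x,y)\in\beta$ and every $u$ in the $\gamma$-class of $x$, the pair $(x,y)$ is $\delta$-related to $(u,P(y,x,u))$, and this is deduced from the defining conditions alone.

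Next I would invoke the bijection property (ii) of Definition \ref{6c2sd1}: the first projection $\pi\colon \delta_{(x,y)}\to\gamma_x$ is a bijection, so given $(x,y)$ and a first coordinate $u\in\gamma_x$ there is exactly one $v$ with $(x,y)\,\delta\,(u,v)$. Combining this uniqueness with the displayed relation, and using condition (i) (which forces $u\,\gamma\,x$, while $(u,v)\in\beta$ is automatic from $\delta\subseteq\beta\times\beta$), pins down $\delta$ completely:
\[
(x,y)\ \delta\ (u,v)\iff (x,y)\in\beta,\ \ u\,\gamma\,x,\ \ \text{and}\ \ v=P(y,x,u).
\]
The right-hand side makes no reference to the particular centering congruence, so both $(\gamma|\beta)_1$ and $(\gamma|\beta)_2$ equal this single relation, giving $(\gamma|\beta)_1=(\gamma|\beta)_2$.

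The only genuinely delicate point will be the bookkeeping in the Mal'cev computation: choosing the three input pairs so that the term collapses to $(x,y)$ on one side and to $(u,P(y,x,u))$ on the other, and verifying that all intermediate pairs lie in $\beta$ (which is automatic from $\Delta(S)\subseteq\beta$ and $(x,y)\in\beta$). Everything else is a formal consequence of $\delta$ being a congruence on $\beta$ together with conditions (i)–(iii). As a sanity check, in the group situation of Remark \ref{6c2sr2} one has $P(y,x,u)=yx^{-1}u$, and $v=yx^{-1}u$ is exactly the relation $vu^{-1}=yx^{-1}$ that defines $(\gamma|\beta)$ there, which reassures me that the derived formula is the correct universal one.
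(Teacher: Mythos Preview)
The paper does not give its own proof of this proposition; it simply quotes the result from \cite[Proposition 221]{smthmv}. So there is nothing in the paper to compare against directly.

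Your argument is correct and is essentially the standard proof in the Mal'cev-variety setting. The crucial steps---that $\delta$ is a congruence on the subalgebra $\beta\le S\times S$ and hence respects the term $P$, that $(x,x)\,\delta\,(u,u)$ for $x\,\gamma\,u$ by condition~(iii), and that the bijection in condition~(ii) forces uniqueness of the second coordinate---are all used exactly as they should be. The verification that every input pair lies in $\beta$ is handled by $\Delta(S)\subseteq\beta$, and the term evaluations $P(x,x,x)=x$, $P(y,x,x)=y$, $P(x,x,u)=u$ are correct from $P(a,b,c)=(a/b)\circ c$. The resulting description
\[
(x,y)\ \delta\ (u,v)\ \Longleftrightarrow\ (x,y)\in\beta,\quad u\,\gamma\,x,\quad v=P(y,x,u)
\]
is indeed independent of $\delta$, which yields $(\gamma|\beta)_1=(\gamma|\beta)_2$. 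Your group sanity check against Remark~\ref{6c2sr2} is also right: $P(y,x,u)=yx^{-1}u$ and $v=yx^{-1}u$ is exactly $vu^{-1}=yx^{-1}$.

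One small cosmetic point: when you write ``$(u,v)\in\beta$ is automatic from $\delta\subseteq\beta\times\beta$'' you are handling only the forward implication; for the converse you implicitly need $(u,P(y,x,u))\in\beta$, which follows because it equals $P(R_1,R_2,R_3)$ with each $R_i\in\beta$ and $\beta$ is closed under term operations. You might state this explicitly, but the logic is already there.
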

\begin{proposition}(\cite[PROPOSITION 226, p. 38]{smthmv})\label{6c2sp4} Let $\gamma$, $\beta_1$ and $\beta_2$ be congruences on a right loop $S$. If $\gamma$ centralizes $\beta_1$ and $\beta_2$, then $\gamma$ centralizes $\beta_1 \circ \beta_2$.
\end{proposition}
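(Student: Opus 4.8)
The plan is to construct the required centering congruence on $\beta_1 \circ \beta_2$ explicitly and then to verify the two-condition criterion of Proposition \ref{6c2sp2}. First I would record that $\beta_1 \circ \beta_2$ is itself a congruence: by Proposition \ref{6c2sp1}(i) the congruences $\beta_1$ and $\beta_2$ permute, and the composite of two permuting congruences is again a congruence. Writing $(\gamma|\beta_1)$ and $(\gamma|\beta_2)$ for the centering congruences witnessing that $\gamma$ centralizes $\beta_1$ and $\beta_2$, I would define a relation $\Theta$ on $\beta_1 \circ \beta_2$ by declaring $(x,z)\,\Theta\,(u,w)$ to hold precisely when there exist ``middle'' elements $y,v \in S$ with $(x,y),(u,v)\in\beta_1$ and $(y,z),(v,w)\in\beta_2$ such that $(x,y)\,(\gamma|\beta_1)\,(u,v)$ and $(y,z)\,(\gamma|\beta_2)\,(v,w)$. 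This $\Theta$ is the natural candidate for $(\gamma|\beta_1\circ\beta_2)$.

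Next I would check that $\Theta$ is a congruence on the subalgebra $\beta_1 \circ \beta_2$ of $S \times S$. Reflexivity follows from reflexivity of $(\gamma|\beta_1)$ and $(\gamma|\beta_2)$, symmetry follows from their symmetry, and compatibility with the operations $\circ$, $/$ and the constant $1$ is inherited componentwise from the fact that $(\gamma|\beta_1)$ and $(\gamma|\beta_2)$ are themselves congruences. The delicate point, and the step I expect to be the main obstacle, is transitivity: when composing two $\Theta$-related chains through a common pair $(u,w)\in\beta_1\circ\beta_2$, that pair may be witnessed by two different middle elements $v$ and $v'$, and these need not coincide (one only gets $(v,v')\in\beta_1\cap\beta_2$), so the two inner $(\gamma|\beta_1)$-relations cannot simply be concatenated.

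To overcome this I would invoke the Mal'cev operation $P(x,y,z)=(x/y)\circ z$, which satisfies $P(a,b,b)=a$ and $P(a,a,b)=b$ and is respected by every congruence. Given $(x,y)\,(\gamma|\beta_1)\,(u,v)$ and $(u,v')\,(\gamma|\beta_1)\,(p,r)$ with mismatched middles $v,v'$, applying $(\gamma|\beta_1)$ to $P$ evaluated on the triples $\big((x,y),(u,v'),(u,v')\big)$ and $\big((u,v),(u,v'),(p,r)\big)$ collapses the first triple to $(x,y)$ and the second to $\big(p,\,P(v,v',r)\big)$, yielding $(x,y)\,(\gamma|\beta_1)\,\big(p,P(v,v',r)\big)$. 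Running the symmetric computation on the $\beta_2$-side gives $(y,z)\,(\gamma|\beta_2)\,\big(P(v,v',r),q\big)$, so $r^{\ast}=P(v,v',r)$ serves as a single reconciled witness for $(p,q)$ (with $y$ retained as the witness for $(x,z)$), and transitivity follows.

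Finally I would verify the two conditions of Proposition \ref{6c2sp2}. Condition (i) is immediate: if $(x,z)\,\Theta\,(u,w)$ then $(x,y)\,(\gamma|\beta_1)\,(u,v)$, whence $x\,\gamma\,u$ by condition (i) for $(\gamma|\beta_1)$. For condition (ii), suppose $(x,x)\,\Theta\,(x,y)$ with middles $a$ and $b$; then $(x,a)\,(\gamma|\beta_1)\,(x,b)$ has equal first coordinates, and the bijectivity of the projection $\pi$ in Definition \ref{6c2sd1}(ii) forces $a=b$; then $(a,x)\,(\gamma|\beta_2)\,(a,y)$ again has equal first coordinates, so the same bijectivity forces $x=y$. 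Thus $\gamma$ centralizes $\beta_1\circ\beta_2$ by means of $\Theta$, its uniqueness being guaranteed by Proposition \ref{6c2sp3}.
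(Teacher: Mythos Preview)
The paper does not actually give a proof of this proposition; it is quoted verbatim from Smith's monograph on Mal'cev varieties and used as a black box. So there is no ``paper's own proof'' to compare against.

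Your argument is correct and is in fact the standard proof in the Mal'cev-variety setting. The construction of $\Theta$ as a relational composite of $(\gamma|\beta_1)$ and $(\gamma|\beta_2)$ through matching middle witnesses is the right object, and your handling of the one genuinely nontrivial step---transitivity, where the two decompositions of the shared element $(u,w)$ may use different middles $v,v'$---is exactly the intended use of the Mal'cev term: applying $P$ componentwise to the triples $\big((x,y),(u,v'),(u,v')\big)$ and $\big((u,v),(u,v'),(p,r)\big)$ in $\beta_1$, and symmetrically in $\beta_2$, collapses the mismatch and produces the reconciled witness $r^{\ast}=P(v,v',r)$. The verification of the two conditions of Proposition~\ref{6c2sp2} is clean; in particular, your use of the injectivity of the projection $\pi$ from Definition~\ref{6c2sd1}(ii) to force $a=b$ and then $x=y$ is the correct reading of that clause.

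One small remark: when you say compatibility with the operations is ``inherited componentwise,'' it is worth spelling out once that if $y_i,v_i$ witness $(x_i,z_i)\,\Theta\,(u_i,w_i)$ then $y_1\circ y_2$ and $v_1\circ v_2$ witness the product, and similarly for $/$; this is immediate but is what makes $\Theta$ a subalgebra of $(\beta_1\circ\beta_2)^2$ before one even worries about transitivity.
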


\section{Center of a Right Loop and Stability Relation}\label{6c3s}
Let $S$ be a right loop. If a congruence $\alpha$ on $S$ is centralized by $S \times S$, then it is called a \textbf{central congruence} \index{congruence!central congruence} (see \cite[p. 42]{smthmv}). By Propositions \ref{6c2sp3} and \ref{6c2sp4}, there exists a unique maximal central congruence $\zeta(S)$ on $S$, called as the \textbf{center congruence} of $S$. For a finite right loop, it is product of all centralizing congruences. The \textbf{center} \index{right loop!center} $\mathcal{Z}(S)$ of $S$ is defined as $\zeta_1$, the equivalence class of the identity $1$. 

The group $G_SS$ has the natural action $\star$ on $S$. Consider the set $\sigma(S)=\{(x,y) \in S \times S|stab(G_SS,x)=stab(G_SS,y)\}$, where for a pemutation group $G$ on a set $X$, $stab(G,u)$ denotes the stabilizer of $u \in X$ in $G$. One can check that $\sigma(S)$ is an equivalence relation on $S$. This relation is called as \textbf{stability relation} on $S$. One also observes that if $(x,y) \in \sigma(S)$, then $stab(G_SS,x \star p)=p~ stab(G_SS,x)~ p^{-1}=p~ stab(G_SS,y)~ p^{-1}=stab(G_SS,y \star p)$ for all $p \in G_SS$. Consider the equivalence class $\sigma(S)_1$ of $1 \in S$ of a right loop $S$. Let $x,y,z \in S$. Write equation $(C6)$ of \cite[Definition 2.1, p. 71]{rltr} as $f(y,z)(x)=(R_yR_zR_{y \circ z}^{-1})(x)$. Recall that our convention for the product in the symmetric group $Sym(S)$ is given as $(rs)(x)=s(r(x))$ for $r,s \in Sym(S)$ and $x \in S$. Which means that $G_S=\langle R_yR_zR_{y \circ z}^{-1} |y,z \in S\rangle$. Let $x \in \sigma(S)_1$. Then  $stab(G_SS,x)=G_S$. This means that $R_yR_zR_{y \circ z}^{-1}(x)=x$ for all $y,z \in S$. This implies that $\sigma(S)_1=\{x \in S|x \circ (y \circ z)=(x \circ y)\circ z, \text{for~all~} y,z \in S\}=N_{G_SS}(G_S) \cap S$, where $N_{G}(H)$ denotes the normalizer of $H$ in $G$. One can observe that $\sigma(S)_1$ is a right subloop of $S$ which is indeed a group. In the following proposition, we obtain that the elements of $\sigma(S)_1$ are left non-singular. 

\begin{proposition}\label{6c3sp2} Let $(S,\circ)$ be a right loop with identity $1$. Let $\sigma(S)_1$ be the equivalence class of $1$ under the equivalence relation $\sigma(S)$. Then the elements of $\sigma(S)_1$ are left non-singular. 
\end{proposition}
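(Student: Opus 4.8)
The plan is to prove the bijectivity of the left translation $L_a^\circ$ for $a\in\sigma(S)_1$ by producing an explicit two-sided inverse, exploiting the two facts about $\sigma(S)_1$ already recorded before the statement: that every element $x\in\sigma(S)_1$ is left-associative, i.e. $x\circ(y\circ z)=(x\circ y)\circ z$ for all $y,z\in S$, and that $(\sigma(S)_1,\circ)$ is a group. Fix $a\in\sigma(S)_1$. Using that $\sigma(S)_1$ is a group under $\circ$, choose its inverse $b\in\sigma(S)_1$, so that $a\circ b=b\circ a=1$. The point of insisting that $b$ lie in $\sigma(S)_1$, and not merely be some one-sided inverse somewhere in $S$, is that $b$ then also left-associates.

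Next I would compute the two composites directly on elements. For every $x\in S$, left-associativity of $a$ (with $y=b$, $z=x$) gives $L_a^\circ(L_b^\circ(x))=a\circ(b\circ x)=(a\circ b)\circ x=1\circ x=x$, and symmetrically left-associativity of $b$ gives $L_b^\circ(L_a^\circ(x))=b\circ(a\circ x)=(b\circ a)\circ x=x$. Hence $L_a^\circ$ and $L_b^\circ$ are mutually inverse bijections of $S$, so $L_a^\circ$ is bijective and $a$ is left non-singular; indeed the equation $a\circ X=c$ then has the unique solution $X=b\circ c$.

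The only real obstacle is conceptual rather than computational: in a general right loop left translations need not be invertible, so one must isolate exactly the feature of $\sigma(S)_1$ that forces invertibility. The crucial input is the combination of closure of $\sigma(S)_1$ under $\circ$-inverses with left-associativity, since these two together are precisely what is needed to cancel $a$ from the left. I would just want to double-check that the inverse supplied by the group structure is genuinely two-sided and that $1\circ x=x$ is used (both are immediate here).

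As an alternative, I could argue entirely inside $G_SS$, using the other characterization $\sigma(S)_1=N_{G_SS}(G_S)\cap S$ together with the fact that $S$ is an NRT of $G_S$ in $G_SS$. Solving $a\circ X=c$ amounts to finding $x\in S$ with $ax\in G_Sc$, i.e. $x\in a^{-1}G_Sc$. Since $a$ normalizes $G_S$, one has $a^{-1}G_Sc=G_Sa^{-1}c$, a single right coset of $G_S$, which meets the transversal $S$ in exactly one point; this yields existence and uniqueness of the solution at once. Here the only delicate step is the coset manipulation $a^{-1}G_S=G_Sa^{-1}$, which is exactly the normalizer condition defining membership in $\sigma(S)_1$.
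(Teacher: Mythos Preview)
Your main argument is essentially the paper's: both use that $\sigma(S)_1$ is a group under $\circ$ and that its elements left-associate over all of $S$, and both exhibit $a'\circ c$ (your $b\circ c$) as the solution of $a\circ X=c$. You are in fact slightly more careful than the paper, since you verify both composites $L_a^\circ L_b^\circ$ and $L_b^\circ L_a^\circ$ are the identity, giving uniqueness as well as existence; the paper's proof only records existence of the solution.

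Your alternative via $\sigma(S)_1=N_{G_SS}(G_S)\cap S$ is a genuinely different route and also correct: rewriting $a\circ X=c$ as $X\in a^{-1}G_Sc=G_Sa^{-1}c$ (using that $a$ normalizes $G_S$) and then invoking that an NRT meets each right coset in exactly one point gives existence and uniqueness in one stroke. This argument is cleaner in that it avoids any elementwise associativity computation and makes transparent why the normalizer condition is exactly what is needed; the paper's approach, by contrast, stays entirely within the right loop and does not appeal to the ambient group $G_SS$.
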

\begin{proof} Let $x \in \sigma(S)_1$ and $y \in S$. Consider the equation $x \circ X=y$. Since $\sigma(S)_1$ is a right subloop, $x^{\prime} \in \sigma(S)_1$ where $x^{\prime}$ is the left inverse of $x$. Since with respect to the induced operation of $S$, $\sigma(S)_1$ is a group, $x \circ x^{\prime}=1$. Also since for each $u \in \sigma(S)_1$ and $v,w \in S$, $u \circ (v \circ w)=(u \circ v) \circ w$, $x^{\prime}\circ y \in S$ is a solution of $x \circ X=y$.   
\end{proof}

The equivalence relation $\sigma(S)$ is a congruence on a loop (see \cite[p. 81]{smth1}) but it need not be a congruence on a right loop $S$. This is shown in following example whose idea came from a GAP (\cite{gap}) calculation:

\begin{example}\label{6c3se1} Consider a subgroup $G=\langle\{u=(4,5,6),v=(1,2,3),w=(1,4)(2,5,3,6)\} \rangle$ $(\cong (\mathbb{Z}_3 \times \mathbb{Z}_3)\rtimes \mathbb{Z}_4)$ of $Sym(6)$. Let $N=\langle w \rangle$ and $H=\langle  w^2 \rangle$. Let $L=\langle u,v \rangle$ and $T=\{I,w\}$. Then $L \in \mathcal{T}(G,N)$ and $T \in \mathcal{T}(N,H)$. Let $S=(TL \setminus \{u\}) \cup \{w^2 u\}$. Then $S \in \mathcal{T}(G,H)$. One observes that $G_SS \cong G$, $G_S \cong H$ and $N_{G_SS}(G_S)=N \cong \mathbb{Z}_4$. With these information, we observe that $\sigma(S)_1=\{1,w\}$. We show that $\sigma(S)$ is not a congruence by showing that it does not satisfy the condition $(2)$ of \cite[Theorem 2.7, p. 2686]{rpsc}.

Take $x=w,y=u^{-1}$ and $z=w^2u$. Note that $y \circ z=y$, $(y \circ z)^{\prime}=z$ and $y \circ (x \circ z)=xv^{-1}u$, where $\circ$ is the underlying binary opearation of $S$ and the convention for the product in this case is given as $(rs)(x)=r(s(x))$ for $r,s \in Sym(6)$ and $x \in \{1,\cdots,6\}$. By equation $(C6)$ of \cite[Definition 2.1, p. 71]{rltr}, we observe that $(f(z,y))^{-1}(xv^{-1}u)=xvu^{-1}$. Finally, $(xvu^{-1}) \circ z = xv^{-1}u^{-1} \notin \sigma(S)_1$. This fails the condition $(2)$ of \cite[Theorem 2.7, p. 2686]{rpsc}, showing that $\sigma(S)$ is not a congruence for the right loop $S$.
\end{example}

\begin{proposition}\label{6c3sp3} For a right loop $S$, the center $\mathcal{Z}(S) \subseteq \sigma(S)_1=S\cap N_{G_SS}(G_S)$.
\end{proposition}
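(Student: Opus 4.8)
The plan is to use the characterization $\sigma(S)_1=\{x\in S\mid x\circ(y\circ z)=(x\circ y)\circ z \text{ for all } y,z\in S\}$ recorded just before the statement, so that it suffices to show every $x\in\mathcal{Z}(S)=\zeta(S)_1$ associates from the left with every pair $y,z\in S$; the identification $\sigma(S)_1=S\cap N_{G_SS}(G_S)$ is likewise already in the text. Throughout write $\zeta=\zeta(S)$ and let $C=(\gamma|\beta)$ be the centering congruence witnessing that $\gamma=S\times S$ centralizes $\beta=\zeta$. Recall $C$ is a congruence on the right subloop $\beta\le S\times S$, hence an equivalence relation compatible with the componentwise operation. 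Fix $x$ with $x\mathrel{\zeta}1$, so that $(x,1)$ and $(1,x)$ lie in $\beta$; since $x\circ y\mathrel{\zeta}y$, $(x\circ y)\circ z\mathrel{\zeta}y\circ z$, and so on, all pairs appearing below genuinely lie in $\beta$ and $C$ may be applied to them.

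The first step is a translation identity: for every $y\in S$, $(x,1)\mathrel{C}(x\circ y,y)$. I would obtain this by multiplying the reflexive relation $(x,1)\mathrel{C}(x,1)$ by the relation $(y,y)\mathrel{C}(1,1)$, the latter being the instance of condition (iii) of Definition \ref{6c2sd1} coming from $(y,1)\in\gamma$. Here it is crucial that $\gamma=S\times S$ is the \emph{full} relation, so that \emph{every} pair of diagonal elements is $C$-related. Componentwise multiplication (legitimate because $C$ is a congruence on $\beta$) gives $(x\circ y,y)\mathrel{C}(x,1)$, and symmetry of $C$ yields the claim.

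With this identity in hand I would, for fixed $y,z$, produce two relations off of $(x,1)$. Replacing $y$ by $y\circ z$ gives $(x,1)\mathrel{C}(x\circ(y\circ z),y\circ z)$. For the other, multiply $(x,1)\mathrel{C}(x\circ y,y)$ by the reflexive relation $(z,z)\mathrel{C}(z,z)$ to obtain $(x\circ z,z)\mathrel{C}((x\circ y)\circ z,y\circ z)$; combining this with the instance $(x,1)\mathrel{C}(x\circ z,z)$ of the translation identity and using transitivity of $C$ yields $(x,1)\mathrel{C}((x\circ y)\circ z,y\circ z)$. Thus $(x,1)$ is $C$-related to the two pairs $(x\circ(y\circ z),y\circ z)$ and $((x\circ y)\circ z,y\circ z)$, which share the common second coordinate $y\circ z$.

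The final, and most delicate, step is to conclude that the two first coordinates coincide, and this is where the main obstacle lies: the bijection $\pi$ of condition (ii) of Definition \ref{6c2sd1} records uniqueness of the \emph{second} coordinate once the first is fixed, which is the wrong direction for the two pairs above. To repair this I would invoke the symmetry condition (iv): from the two relations just derived we get $(1,x)\mathrel{C}(y\circ z,x\circ(y\circ z))$ and $(1,x)\mathrel{C}(y\circ z,(x\circ y)\circ z)$. Now both resulting pairs lie in the $C$-class of $(1,x)\in\beta$ and share the first coordinate $y\circ z$, so injectivity of $\pi$ on $C_{(1,x)}$ (condition (ii) applied to $(1,x)$) forces $x\circ(y\circ z)=(x\circ y)\circ z$. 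As $y,z$ were arbitrary, $x\in\sigma(S)_1$, which proves $\mathcal{Z}(S)\subseteq\sigma(S)_1=S\cap N_{G_SS}(G_S)$.
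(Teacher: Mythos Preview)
Your argument is correct and follows essentially the same route as the paper's proof: both start from $(x,1)\,(\gamma|\zeta)\,(x,1)$, use condition~(iii) (available for \emph{all} diagonal pairs since $\gamma=S\times S$) together with the congruence property of $(\gamma|\zeta)$ to build up to a relation comparing $(x\circ y)\circ z$ with $x\circ(y\circ z)$, and then finish via conditions~(iv) and~(ii). The only tactical difference is that the paper applies the division $/$ once to pass directly to the pair $\bigl(((x\circ y)\circ z)/(y\circ z),\,1\bigr)$ with second coordinate~$1$, whereas you instead chain the translation identity $(x,1)\,C\,(x\circ z,z)$ with transitivity to land on two pairs sharing second coordinate $y\circ z$; after swapping via~(iv), both arguments invoke the injectivity in~(ii) in the same way.
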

\begin{proof} Let $\zeta$ be the congruence on $S$ determined by $\mathcal{Z}(S)$. Let $\gamma=S \times S$ and $\gamma$ centralize $\zeta$ by a centering congruence $(\gamma|\zeta)$. Let $x \in \mathcal{Z}(S)$ and $y,z \in S$. Since $(\gamma|\zeta)$ is reflexive, $(x,1)~(\gamma|\zeta)~(x,1)$. By condition $(iii)$ of Definition \ref{6c2sd1}, $(1,1)~(\gamma|\zeta)~(y,y)$, $(1,1)~(\gamma|\zeta)~(z,z)$ and $(1,1)~(\gamma|\zeta)~(y \circ z,y \circ z)$. Since $(\gamma|\zeta)$ is a congruence, $(x,1)=(((x,1)\circ(1,1))\circ(1,1))/(1,1)~(\gamma|\zeta)~(((x,1)\circ(y,y))\circ(z,z))/(y \circ z,y\circ z)=(((x\circ y)\circ z)/(y\circ z),1)$.

By the condition $(ii)$ and $(iv)$ of Definition \ref{6c2sd1}, $x=((x\circ y)\circ z)/(y\circ z)$. This implies that $(x \circ y)\circ z=x \circ (y\circ z)$. Thus, $\mathcal{Z}(S) \subseteq \sigma(S)_1$. 
\end{proof}

\begin{proposition}\label{6c3sp1} Let $(S,\circ)$ be a right loop. Let $\beta$ and $\gamma$ be a congruences on $S$. Assume that $\beta$ is centralized by $\gamma$ by means of $(\gamma|\beta)$. Then 
\begin{enumerate}
\item[(A)] $c \circ b^{\prime}=b^{\prime} \circ c$ for all $b \in \beta_1$ and $c \in \gamma_1$, where $u^{\prime}$ is left inverse of $u \in S$.

\item[(B)] if $\gamma=S \times S$, then $b \circ c=c \circ b$ for all $b \in \beta_1$ and $c \in S$.
\end{enumerate}
\end{proposition}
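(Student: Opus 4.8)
The plan is to argue inside the right loop $\beta$, which by Proposition \ref{6c2sp1}(ii) is a right subloop of $S\times S$ containing the diagonal, and to use repeatedly that the centering congruence $(\gamma|\beta)$ is a congruence on $\beta$ and hence respects the componentwise operations $\circ$ and $/$. The core claim I would isolate is: for every $a\in\beta_1$ and every $c$ with $(c,c)~(\gamma|\beta)~(1,1)$ one has $a\circ c=c\circ a$. Part (A) is then the instance $a=b'$ (note $b'\in\beta_1$ whenever $b\in\beta_1$, since $\beta$ preserves the unary left-inverse operation $l(x)=x'$ and $1'=1$), and part (B) is the instance $a=b$ once $\gamma=S\times S$ makes the hypothesis on $c$ automatic.

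First I would record two relations. Reflexivity of $(\gamma|\beta)$ gives $(a,1)~(\gamma|\beta)~(a,1)$, and condition (iii) of Definition \ref{6c2sd1}, applied to $(c,1)\in\gamma$, gives $(c,c)~(\gamma|\beta)~(1,1)$. Every pair about to appear lies in $\beta$: since $a~\beta~1$ we get $a\circ c~\beta~c$ and $c\circ a~\beta~c$, while $(c,c)$ and $(1,1)$ are on the diagonal. Multiplying the two relations in $\beta$ on the right gives $(a\circ c,\,c)~(\gamma|\beta)~(a,1)$, and on the left gives $(c\circ a,\,c)~(\gamma|\beta)~(a,1)$; transitivity and symmetry then yield $(a\circ c,\,c)~(\gamma|\beta)~(c\circ a,\,c)$.

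To extract the equation I would apply the swap condition (iv) of Definition \ref{6c2sd1} to obtain $(c,\,a\circ c)~(\gamma|\beta)~(c,\,c\circ a)$, two related pairs sharing the first coordinate $c$; the injectivity half of the bijection $\pi$ in condition (ii) of Definition \ref{6c2sd1} then forces $a\circ c=c\circ a$. Equivalently, dividing $(a\circ c,c)~(\gamma|\beta)~(c\circ a,c)$ by $(c\circ a,c)$ produces $\big((a\circ c)/(c\circ a),\,1\big)~(\gamma|\beta)~(1,1)$, and applying (iv) together with Proposition \ref{6c2sp2}(ii) gives $(a\circ c)/(c\circ a)=1$, i.e.\ $a\circ c=c\circ a$. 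For (B) I set $\gamma=S\times S$: the only use of a hypothesis on $c$ was deducing $(c,c)~(\gamma|\beta)~(1,1)$ from (iii), and now $(c,1)\in\gamma$ for every $c\in S$, so the conclusion holds for all $c\in S$ with $a=b$. The step needing the most care is the bookkeeping that each constructed pair genuinely lies in $\beta$, so that $(\gamma|\beta)$ may be applied, together with the correct orientation in the final appeal to condition (ii)/Proposition \ref{6c2sp2}(ii), where the right-loop identity $X/Y=1\Leftrightarrow X=Y$ is what turns the relation into the stated equality.
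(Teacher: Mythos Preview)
Your proof is correct and in fact establishes a slightly stronger intermediate claim than the paper does: you show directly that $a\circ c=c\circ a$ for every $a\in\beta_1$ and $c\in\gamma_1$, whereas the paper's argument for (A) only yields $c\circ b'=b'\circ c$, i.e.\ commutation with elements in the image of the left-inverse map. The mechanics differ accordingly. The paper multiplies the three relations $(1,1)\,(\gamma|\beta)\,(c,c)$, $(b',1)\,(\gamma|\beta)\,(b',1)$, $(b,1)\,(\gamma|\beta)\,(b,1)$ in two different orders to obtain $(c\circ b')\circ b=c$ and $(b'\circ c)\circ b=c$, and then right-cancels $b$. For (B) the paper must then argue separately, invoking Proposition~\ref{6c3sp3} to place $\beta_1$ inside $\sigma(S)_1$, conclude that $(\beta_1,\circ)$ is a group, and hence that every $b\in\beta_1$ is some $b'$. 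Your two-fold product $(a,1)\circ(c,c)$ versus $(c,c)\circ(a,1)$ followed by the injectivity in condition~(ii) of Definition~\ref{6c2sd1} bypasses this entirely: (B) falls out immediately once $\gamma=S\times S$, with no appeal to Proposition~\ref{6c3sp3} or to $\beta_1$ being a group. Your route is therefore more self-contained.
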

\begin{proof} $(A)$ Let $B=\beta_1$ and $C=\gamma_1$. Let $b \in B$ and $c \in C$. Since $(\gamma|\beta)$ is reflexive, 
 \begin{enumerate} \item[(i)] $(b,1)~(\gamma|\beta)~(b,1)$ and
                   \item[(ii)] $(b^{\prime},1)~(\gamma|\beta)~(b^{\prime},1)$, where $u^{\prime}$ denote the left inverse of $u \in S$.
 \end{enumerate} By $(iii)$ of Definition \ref{6c2sd1}, \begin{enumerate} \item[(iii)] $(1,1)~(\gamma|\beta)~(c,c)$. \end{enumerate}
 Since $(\gamma|\beta)$ is congruence, by equations $(iii), (ii)$ and $(i)$ we have \begin{equation}\label{3ce1} (1,1)~(\gamma|\beta)~((c\circ b^{\prime}) \circ b,c)\end{equation}
 Since $(\gamma|\beta)$ is symmetric and transitive, $(iii)$ and (\ref{3ce1}) imply that \begin{equation}\label{3ce2} (c,c)~(\gamma|\beta)~((c\circ b^{\prime}) \circ b,c)\end{equation}
 The condition $(ii)$ and $(iv)$ of Definition \ref{6c2sd1} imply that \begin{equation}\label{3ce3} (c \circ b^{\prime})\circ b=c \end{equation}
 Similarly, by equations $(ii),(iii), (i)$ and by arguing as for (\ref{3ce2}) and (\ref{3ce3}) we have \begin{equation}\label{3ce4} (b^{\prime} \circ c) \circ b=c\end{equation}
 By (\ref{3ce3}) and (\ref{3ce4}), we have \begin{equation}\label{3ce5} c \circ b^{\prime}=b^{\prime} \circ c\end{equation}
 
\noindent $(B)$ Let $\gamma=S \times S$. Since the congruence $\zeta$ determined by $\mathcal{Z}(S)$ is the maximal congruence centralized by $S \times S$, $\beta_1 \subseteq \mathcal{Z}(S)$. Also since $(\sigma(S)_1,\circ)$ is a group, by Proposition \ref{6c3sp3}, $(\beta_1,\circ)$ is a group. Hence, by $(A)$ \[b \circ c=c \circ b ~ \text{for~all}~ b \in \beta_1, c \in S.\] 
 
\end{proof}
\begin{remark} By Propositions \ref{6c3sp3} and \ref{6c3sp1}, the center $\mathcal{Z}$ of a right loop $S$ is an abelian group.
\end{remark}

One observes that if the normalizer $N_{G_SS}(G_S)$ is normal in $G_SS$, then $\sigma(S)$ becomes a congruence on $S$ (for $\sigma(S)_1$ is the kernel of the homomorphism from $S$ to $G_SS/N_{G_SS}(G_S)$ given by $x \mapsto N_{G_SS}(G_S)x$). For a loop, $\sigma$ concides with the center congruence (see \cite[Proposition 3.15, p. 83]{smth1}) but it need not concide for a right loop (when $\sigma(S)$ is a congruence). This is shown in following example:

\begin{example}\label{6c3se2} Let $G=Alt(4)$ and $H=\{I, (1,2)(3,4)\}$. Consider the ordered set $S=\{I,(1,3)(2,4),(1,2,3),(1,3,2),(2,3,4),(1,3,4)\}$. Then $S$ is an NRT of $H$ in $G$. Let $x_i \in S$ be the element placed at $i^{th}$ place. Then for the convenience of notation, we identify $x_i$ by $\underline{i}$. Now the multiplication table of $S$ is given as follows:     
\begin{center}
\begin{tabular}{c|cccccc} 
$\circ$  & \underline{1} & \underline{2} & \underline{3} & \underline{4} & \underline{5} & \underline{6} \\ 
\hline 
\underline{1} & \underline{1} & \underline{2} & \underline{3} & \underline{4} & \underline{5} & \underline{6} \\ 
\underline{2} & \underline{2} & \underline{4} & \underline{1} & \underline{1} & \underline{3} & \underline{2} \\
\underline{3} & \underline{3} & \underline{6} & \underline{2} & \underline{5} & \underline{6} & \underline{4} \\
\underline{4} & \underline{4} & \underline{1} & \underline{5} & \underline{2} & \underline{1} & \underline{3} \\
\underline{5} & \underline{5} & \underline{3} & \underline{6} & \underline{6} & \underline{4} & \underline{5} \\
\underline{6} & \underline{6} & \underline{5} & \underline{4} & \underline{3} & \underline{2} & \underline{1} \\
\end{tabular}
\end{center}
One can observe that $G_SS\cong G$, the alternating group of degree $4$, $\left|G_S\right|=2$ and $N_{G_SS}(G_S)\cong \mathbb{Z}_2 \times \mathbb{Z}_2 \trianglelefteq G_SS$. Then $\sigma(S)_1=\{\underline{1},\underline{6}\}$ is an invariant right subloop. If $\mathcal{Z}(S)$ is the center of $S$, then by Proposition \ref{6c3sp3} the order $\left|\mathcal{Z}(S)\right|=1$ or $\left|\mathcal{Z}(S)\right|=2$. If possible, assume that $\left|\mathcal{Z}(S)\right|=2$. Then $\underline{6} \in \mathcal{Z}(S)$ (Proposition \ref{6c3sp3}).  But $\underline{6} \circ \underline{2} \neq  \underline{2} \circ \underline{6}$. This is a contradiction to the Proposition \ref{6c3sp1}. Thus, $\left|\mathcal{Z}(S)\right|=1$. This shows that in general the center congruence need not concide with $\sigma(S)$.   
\end{example}

Let $T$ be an invariant right subloop of finite right loop $S$. By \cite[Theorem 2.7, p. 2686]{rpsc}, $\delta=\{(x \circ y, y)| x \in T, y \in S\}$ is the congruence determined by $T$. Then the equivalence class $\delta_z=T \circ z$ is in bijection with $T$. Which means that the order of $T$ divides the order of $S$.
In the following example, we calculate the centers of all right loops of order upto $5$:

\begin{example}\label{6c3se3} Since the center of a right loop is an invariant right subloop, all the right loops of order $3$ and $5$ which are not group has trivial center and a right loop of order $2$ is a group.
 
Let $(S,\circ)$ be a right loop of order $4$. Since $Core_{G_SS}(G_S)=\{1\}$, $G_SS$ is isomorophic to a subgroup of $Sym(4)$, the symmetric group of degree $4$. By the structure of $Sym(4)$, we have following choices:

\noindent $(i)$ $G_SS\cong S \cong \mathbb{Z}_4$, $\left|G_S\right|=1$, \qquad $(ii)$ $G_SS\cong S \cong \mathbb{Z}_2 \times \mathbb{Z}_2$, $\left|G_S\right|=1$,\\
$(iii)$ $G_SS\cong Sym(4)$, $G_S\cong Sym(3)$,\\
$(iv)$ $G_SS\cong Alt(4)$, $G_S\cong Alt(3)$ and\\
$(v)$ $G_SS\cong D_8$, $\left|G_S\right|=2$  

For the cases $(i)$ and $(ii)$, we have $\mathcal{Z}(S)\cong S$. The normalizer $N_{G_SS}(G_S)=G_S$ for the cases $(iii)$ and $(iv)$. This means that $\sigma(S)_1=\{1\}$. Therefore, by Proposition \ref{6c3sp3} $\mathcal{Z}(S)=\{1\}$ for the cases $(iii)$ and $(iv)$. Finally, consider the case $(v)$. Let $G_S=\{1,h\}$. Note that $N_{G_SS}(G_S) \cong \mathbb{Z}_2 \times \mathbb{Z}_2 \trianglelefteq G_SS$. This means that $\left|\sigma(S)_1\right|=2$. Let $S=\{1,x,y,z\}$ and $\sigma(S)_1=\{1,x\}$. Observe that $R_x^2=I_S$. Write $R_x=(1,x)(y,z)$ as a product of transpositions. If $x \in Z(G_SS)$, then $x \circ u=u \circ x$ for all $u \in S$. Assume that $hx \in Z(G_SS)$. If $x \circ y=x$, then either $xy=x$ or $xy=hx$, where juxtaposition is the binary operation in $G_SS$. But then either $y=1$ or $y=h$. This is a contradiction. By similar arguments, $x \circ y=y$ will give a contradiction. Hence $x \circ y = z$. Similarly $x \circ z=y$. Therefore, we are finally left with following table: 
\begin{center}
\begin{tabular}{c|cccc} 
$\circ$  & 1 & x & y & z  \\ 
\hline 
1 & 1 & x & y & z \\ 
x & x & 1 & z & y \\
y & y & z & $\star$ & $\star$ \\
z & z & y & $\star$ & $\star$ \\
\end{tabular}
\end{center}
Since $S$ is a right loop, $u \circ v \in \{1,x\}$, where $u,v \in \{y,z\}$. If $y \circ y=1$ and $y \circ z=x$, then $S=\mathcal{Z}(S) \cong \mathbb{Z}_2 \times \mathbb{Z}_2$. If $y \circ y=x$ and $y \circ z=1$, then $S=\mathcal{Z}(S) \cong \mathbb{Z}_4$. Therefore, either $y \circ y=1$ and $y \circ z=1$ or $y \circ y=x$ and $y \circ z=x$. But, both the right loops are isomorphic with an isomorphism $p$ defined by $p(1)=1$, $p(x)=x$, $p(y)=z$ and $p(z)=y$. Assume that $y \circ y=1$ and $y \circ z=1$. Now, we will show that $\{1,x\}$ is the center of $S$. 

Let $\beta=\{(1,1),(x,x),(y,y),(z,z),(1,x),(x,1),(y,z),(z,y)\}$. Clearly, $\beta$ is a congruence on $S$. Let $\gamma=S \times S$. Let $\mathcal{X}=\{((x,x),(y,y))|(x,y) \in \gamma\}$. Consider $(\gamma|\beta)=\mathcal{X} \cup V \cup V^{-1}$, where $V=\{\left((1,x),(x,1)\right),\left((1,x),(y,z)\right),\\ \left((1,x),(z,y)\right),\left((x,1),(y,z)\right),\left((x,1),(z,y)\right),\left((y,z),(z,y)\right)\}$ . One can check that $(\gamma|\beta)$ is a right subloop of $\beta \times \beta$. By Proposition \ref{6c2sp1}, $(\gamma|\beta)$ is a congruence on $\beta$. One can also observe that $\gamma$ centralizes $\beta$ by means of $(\gamma|\beta)$. This shows that $\left|\mathcal{Z}(S)\right|\neq 1$. Since $\mathcal{Z}(S) \subseteq \sigma(S)_1$ (see Proposition \ref{6c3sp3}) and $\left|\sigma(1,x)\right|=2$, $\mathcal{Z}(S)=\{1,x\}$.

\end{example}

In Example \ref{6c3se2}, we have seen that $\sigma(S)$ need not concide with the center congruence on $S$. Theorem \ref{6c3st1} gives a necessary and a sufficient condition when $\sigma(S)$ concides with the center congruence. To prove the theorem, we need a lemma.
\begin{lemma}\label{6c3sl1} Let $(S,\circ)$ be a right loop. For any $x \in S$, let $x^{\prime}$ denote the left inverse of $x$ in $S$. Then
\begin{enumerate}
\item[(i)] if $x \in \sigma(S)_1$, then $(x \circ y)/z=x \circ (y/z)$ for all $y,z \in S$
\item[(ii)] if $x \in \sigma(S)_1$, then $x/y=x \circ y^{\prime}$ for all $y \in S$
\item[(iii)] if $\sigma(S)_1=Z(G_SS)$ and $x \in \sigma(S)_1$, then $x \circ (y \circ z)=(y \circ x) \circ z$ for all $y,z \in S$
\item[(iv)] if $\sigma(S)_1=Z(G_SS)$ and $y \circ (x \circ z)=(y \circ x) \circ z $ for all $x \in \sigma(S)_1$ and $y,z \in S$, then $y/(x \circ z)=x^{\prime} \circ (y/z)$.
\end{enumerate}
\end{lemma}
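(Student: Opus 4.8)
The plan is to reduce every part to two basic facts about an element $x \in \sigma(S)_1$. First, such an $x$ is \emph{left-associative}: $x \circ (a \circ b) = (x \circ a) \circ b$ for all $a,b \in S$. This is precisely the defining description $\sigma(S)_1 = \{x \in S \mid x \circ (y \circ z) = (x \circ y)\circ z \ \forall y,z\} = S \cap N_{G_SS}(G_S)$ recalled in the discussion preceding Proposition \ref{6c3sp2}. Combined with the right-loop identities $(c/d)\circ d = c$, $(c\circ d)/d = c$, $y' \circ y = 1$ and $1 \circ c = c$, this already handles (i) and (ii). Second, for the parts that assume $\sigma(S)_1 = Z(G_SS)$, such an $x$ also \emph{commutes}: $x \circ a = a \circ x$ for all $a \in S$; extracting this is the one step that needs genuine care.

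For (i), I would check that $x \circ (y/z)$ solves $X \circ z = x \circ y$: applying $R_z$ and left-associativity, $(x \circ (y/z)) \circ z = x \circ ((y/z)\circ z) = x \circ y$, so by uniqueness of right division $x \circ (y/z) = (x \circ y)/z$. Part (ii) is the same trick with $R_y$: $(x \circ y') \circ y = x \circ (y' \circ y) = x \circ 1 = x$, hence $x \circ y' = x/y$. Neither of these uses the centrality hypothesis.

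The crux is (iii), specifically deriving $x \circ a = a \circ x$ from $x \in Z(G_SS)$. Here I would use that $S$ is an NRT of $G_S$ in $G_SS$, so every group product factors \emph{uniquely} as $ab = f(a,b)\,(a\circ b)$ with $f(a,b) \in G_S$ and $a \circ b \in S$. Since $x$ is central, $xa = ax$ in $G_SS$; comparing the two factorizations $f(x,a)\,(x \circ a) = f(a,x)\,(a \circ x)$ and invoking uniqueness of the coset representative in $S$ forces $x \circ a = a \circ x$ (and incidentally $f(x,a)=f(a,x)$). Granting this, (iii) is immediate: left-associativity gives $x \circ (y \circ z) = (x \circ y)\circ z$, and commutativity rewrites $x \circ y$ as $y \circ x$, yielding $x \circ (y \circ z) = (y \circ x)\circ z$.

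Finally, (iv) is a computation in the style of (i)--(ii), now also using the hypothesised identity $y \circ (x \circ z) = (y \circ x)\circ z$ together with commutativity and $x \circ x' = 1$ (valid because $\sigma(S)_1$ is a group, so $x' \in \sigma(S)_1$). Writing $w = y/z$, so $w \circ z = y$, I would apply the hypothesis with $x'\circ w$ in place of the first variable to get $(x'\circ w)\circ(x\circ z) = ((x'\circ w)\circ x)\circ z$, then simplify $(x'\circ w)\circ x = x \circ (x'\circ w) = (x \circ x')\circ w = 1 \circ w = w$ using commutativity and left-associativity; the whole expression is therefore $w \circ z = y$, which by uniqueness of right division says exactly $x' \circ (y/z) = y/(x\circ z)$. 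The only genuinely delicate point in the entire lemma is the passage from group-level centrality in $G_SS$ to the loop-level commutativity $x \circ a = a \circ x$ needed in (iii) and (iv); everything else is a direct application of left-associativity and the division identities.
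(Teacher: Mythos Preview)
Your proof is correct. Parts (i) and (iii) match the paper's arguments essentially verbatim: the paper also derives (i) from left-associativity of $x$ together with $(y/z)\circ z = y$, and for (iii) it likewise compares two $G_S\!\cdot\! S$ factorisations in $G_SS$ (it expands $yxz = xyz$ directly rather than first isolating $x\circ a = a\circ x$, but this is cosmetic).

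Parts (ii) and (iv) differ in presentation. The paper invokes the $c$-groupoid formula $u/v = (u\,\theta f(v',v)^{-1}) \circ v'$ from \cite{rltr}: for (ii) it observes that $x \in \sigma(S)_1$ means $x\,\theta f(y,z) = x$ for all $y,z$, so the formula collapses to $x/y = x\circ y'$; for (iv) it runs a longer chain through this formula and the hypothesis before cancelling $R_z$. Your route bypasses the $c$-groupoid machinery entirely, simply verifying that the proposed expression solves the relevant equation $X\circ(\cdots) = \cdots$ and invoking uniqueness of right division. This is more self-contained and arguably cleaner; the paper's version has the minor advantage of making explicit the connection to the group-torsion action $\theta$ used elsewhere in \cite{rltr,rpsc}.
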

\begin{proof} $(i)$ Let $x \in \sigma(S)_1$. Then for any $y,z \in S$, \[x \circ y=x \circ (y/z \circ z)=(x \circ y/z) \circ z.\] This implies that $(x \circ y)/z=x \circ (y/z)$.
 
\noindent $(ii)$ Let $x \in \sigma(S)_1$. Then $x \circ (y \circ z)=(x \circ y) \circ z$ for all $y,z \in S$. Let $(S,G_S,\sigma^{\prime},f)$ be associated $c$-groupoid (see \cite[Definition 2.1, p. 71]{rltr}). Then $x \theta f(y,z)=x$ for all $y,z \in S$, where $\theta$ is the right action of $G_S$ on $S$. Note that $x/y=x \theta f(y^{\prime},y)^{-1} \circ y^{\prime}$. This implies that $x/y=x \circ y^{\prime}$.

\noindent $(iii)$ Assume that $\sigma(S)_1=Z(G_SS)$. Let $x \in \sigma(S)_1$ and $y,z \in S \subseteq G_SS$. Then \begin{equation} \label{6c3sl1e1} yxz=xyz \end{equation}
First consider the L.H.S. of (\ref{6c3sl1e1}).
\begin{equation} \label{6c3sl1e2}yxz=f(y,x)(y \circ x)z= f(y,x)f(y\circ x,z) (y \circ x) \circ z \end{equation}
Now, consider the R.H.S. of (\ref{6c3sl1e1}) 
\begin{equation} \label{6c3sl1e3}xyz=xf(y,z)(y \circ z)= f(y,z)x (y \circ z)=f(y,z)f(x,y \circ z) x \circ (y \circ z)  \end{equation}
By (\ref{6c3sl1e2}) and (\ref{6c3sl1e3}) and the uniqueness of expression, we have \[x \circ (y \circ z)= (y \circ x) \circ z.\]

\noindent $(iv)$ Assume that $\sigma(S)_1=Z(G_SS)$. Let $x \in \sigma(S)_1$ and $y \in S \subseteq G_SS$. Then $xy=yx$. This implies that $f(x,y)x\circ y =f(y,x)y \circ x$. By the uniqueness of expression 
\begin{equation} \label{6c3sl1e4}f(x,y)= f(y,x)
\end{equation}
and 
\begin{equation} \label{6c3sl1e5} x \circ y= y \circ x \end{equation}
Also, we observe that \\ 
$(y \theta f(z^{\prime},z)^{-1} \circ z^{\prime}) \circ z=y \circ (z^{\prime} \circ z)=y=y \circ ((x \circ z)^{\prime} \circ (x \circ z))$ \\
$=(y \theta f((x \circ z)^{\prime},(x \circ z))^{-1} \circ (x \circ z)^{\prime}) \circ (x \circ z)$\\
$=((y \theta f((x \circ z)^{\prime},(x \circ z))^{-1} \circ (x \circ z)^{\prime}) \circ x) \circ z ~ \text{(from~ the~assumption)}$\\
$=(x \circ (y \theta f((x \circ z)^{\prime},(x \circ z))^{-1} \circ (x \circ z)^{\prime})) \circ z$ (by (\ref{6c3sl1e5}))\\
$=x \circ ((y \theta f((x \circ z)^{\prime},(x \circ z))^{-1} \circ (x \circ z)^{\prime}) \circ z)$ (for $x \in \sigma(S)_1$)

This gives \\  $(y/z) \circ z  =  x \circ ((y/(x \circ z)) \circ z)$ (for $u/v=(u \theta f(v^{\prime},v) \circ v^{\prime}) \circ v$ for all $u,v \in S$)

\qquad ~ $\Leftrightarrow x^{\prime} \circ ((y/z) \circ z)$ 
   $=(y/(x \circ z)) \circ z$ (for $x \in \sigma(S)_1$) 

\qquad ~   $\Leftrightarrow (x^{\prime} \circ (y/z)) \circ z$ 
  $=(y/(x \circ z)) \circ z$ (for $x^{\prime} \in \sigma(S)_1$).  Hence \[x^{\prime} \circ (y/z)=y/(x \circ z)\]   
\end{proof}
\begin{theorem}\label{6c3st1} Let $(S,\circ)$ be a right loop. Assume that $\sigma(S)$ is a congruence on $S$. Then 
\begin{enumerate}
 \item[(A)] if $ \sigma(S)_1=Z(G_SS)$ and $(v \circ u) \circ w=v \circ (u \circ w)$ for all $u \in \sigma(S)_1$ and $v,w \in S$, then $\sigma(S)_1=\mathcal{Z}(S)$
 \item[(B)] if $\sigma(S)_1=\mathcal{Z}(S)$, then $ \sigma(S)_1 \subseteq Z(G_SS)$ and $N_{G_SS}(G_S)=G_S Z(G_SS)$.
\end{enumerate}
\end{theorem}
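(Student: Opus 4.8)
The plan is to treat the two implications separately, exploiting in each case the maximality and uniqueness of the center congruence together with the explicit conditions of Definition~\ref{6c2sd1}. For (A), since Proposition~\ref{6c3sp3} already supplies $\mathcal{Z}(S)\subseteq\sigma(S)_1$, it suffices to prove the reverse inclusion, and for this I would show that the congruence $\sigma(S)$ is itself centralized by $S\times S$: then $\sigma(S)$ is a central congruence, hence contained in the maximal central congruence $\zeta(S)$, whence $\sigma(S)_1\subseteq\zeta(S)_1=\mathcal{Z}(S)$. Concretely, writing $\beta=\sigma(S)$ and $\gamma=S\times S$, I would define the candidate centering congruence by declaring $(x,y)\,(\gamma|\beta)\,(u,v)$ to hold exactly when $y/x=v/u$; note that $y/x,\,v/u\in\sigma(S)_1$ whenever $(x,y),(u,v)\in\beta$, and that $\sigma(S)_1$ is a group. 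The diagonal of $\beta$ clearly lies in this relation, so by Proposition~\ref{6c2sp1}(ii) it will be a congruence on $\beta$ as soon as it is a right subloop of $\beta\times\beta$, and the two centralizing conditions of Proposition~\ref{6c2sp2} are then immediate: (i) is automatic because $\gamma=S\times S$, and (ii) reduces to $x/x=y/x$, i.e.\ $1=y/x$, i.e.\ $y=x$.

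The substantive part of (A) is the verification that this relation is closed under the operations $\circ$ and $/$ of $\beta\times\beta$. Writing $y_i=a_i\circ x_i$ and $v_i=a_i\circ u_i$ with $a_i\in\sigma(S)_1$ ($i=1,2$), I would compute $(y_1\circ y_2)/(x_1\circ x_2)$ and $(y_1/y_2)/(x_1/x_2)$ and show they equal $a_1\circ a_2$ and $a_1\circ a_2^{\prime}$ respectively, with the same values arising from the $(u,v)$ side, which forces closure. This is exactly where the two hypotheses enter: from $\sigma(S)_1=Z(G_SS)$ one obtains (via Lemma~\ref{6c3sl1}(iii) and the defining left-associativity of $\sigma(S)_1$) that elements of $\sigma(S)_1$ commute under $\circ$ with every element of $S$, while the assumed identity $(v\circ u)\circ w=v\circ(u\circ w)$ for $u\in\sigma(S)_1$ lets one shuttle a central factor from the middle to the outside. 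For $\circ$ the chain $(a_1\circ x_1)\circ(a_2\circ x_2)=(x_1\circ a_1)\circ(a_2\circ x_2)=x_1\circ\big(a_1\circ(a_2\circ x_2)\big)=\cdots=(a_1\circ a_2)\circ(x_1\circ x_2)$ does the job; for $/$ I would invoke Lemma~\ref{6c3sl1}(i) and (iv), the latter using precisely this associativity hypothesis, to pull the central factors out. I expect this closure computation to be the main obstacle of (A), though it is essentially bookkeeping.

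For (B) I would first record the general identity $N_{G_SS}(G_S)=G_S\,\sigma(S)_1$, valid for any right loop: since $\sigma(S)_1=S\cap N_{G_SS}(G_S)$ and $S$ is a normalized right transversal of $G_S$, the subgroup $N_{G_SS}(G_S)$ is exactly the union of those cosets $G_S s$ with $s\in S\cap N_{G_SS}(G_S)$. Granting this, the second assertion $N_{G_SS}(G_S)=G_S Z(G_SS)$ follows formally from the first assertion $\sigma(S)_1\subseteq Z(G_SS)$, because $Z(G_SS)\subseteq N_{G_SS}(G_S)$ always. Thus all of (B) reduces to proving $\sigma(S)_1\subseteq Z(G_SS)$. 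Here I would use that, identifying $a\in S$ with the permutation $R_a\in G_SS$, the group $G_SS=\langle R_s:s\in S\rangle$ is generated by these permutations, so $a\in Z(G_SS)$ is equivalent to $R_aR_s=R_sR_a$ for all $s\in S$, that is (with the paper's convention $(rs)(x)=s(r(x))$) to $(x\circ a)\circ s=(x\circ s)\circ a$ for all $x,s\in S$.

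To establish this last identity for $a\in\sigma(S)_1=\mathcal{Z}(S)$, note first that $\sigma(S)$ and the center congruence share $\sigma(S)_1=\mathcal{Z}(S)$ as the class of $1$, hence coincide, so $\sigma(S)$ is centralized by $\gamma=S\times S$ via a unique (Proposition~\ref{6c2sp3}) centering congruence $(\gamma|\beta)$. Using reflexivity, condition (iii) of Definition~\ref{6c2sd1}, the congruence property, and the commutativity $a\circ t=t\circ a$ from Proposition~\ref{6c3sp1}(B), I would derive the two relations $(a,1)\,(\gamma|\beta)\,\big((x\circ a)\circ s,\;x\circ s\big)$ and $(a,1)\,(\gamma|\beta)\,\big((x\circ s)\circ a,\;x\circ s\big)$ (the first by multiplying $(a,1)(\gamma|\beta)(x\circ a,x)$ by a diagonal pair $(s,s)$, the second by specializing $(a,1)(\gamma|\beta)(t\circ a,t)$ at $t=x\circ s$). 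By symmetry and transitivity these give $\big((x\circ a)\circ s,\,x\circ s\big)\,(\gamma|\beta)\,\big((x\circ s)\circ a,\,x\circ s\big)$, and then condition (iv) followed by the injectivity of the map $\pi$ of Definition~\ref{6c2sd1}(ii) — two members of one class with equal first coordinate $x\circ s$ must coincide — forces $(x\circ a)\circ s=(x\circ s)\circ a$. The main obstacle in (B) is precisely spotting this pair of relations sharing the common second coordinate $x\circ s$, so that the bijectivity of $\pi$ can be brought to bear; once the composition is found, the remainder is formal.
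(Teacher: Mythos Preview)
Your argument for (A) is essentially the paper's: define the centering relation on $\beta=\sigma(S)$ by equality of the ``quotient'' in $\sigma(S)_1$, check it is a right subloop of $\beta\times\beta$ containing the diagonal (hence a congruence by Proposition~\ref{6c2sp1}(ii)), and verify the two conditions of Proposition~\ref{6c2sp2}. The only cosmetic difference is that you use $y/x=v/u$ where the paper uses $x/y=u/v$; since $\sigma(S)_1$ is a group and $y/x=(x/y)^{-1}$ there, the two relations coincide. Your closure computations invoke exactly the same ingredients (Lemma~\ref{6c3sl1}(i),(iii),(iv), equation~(\ref{6c3sl1e5}), and the middle-associativity hypothesis).

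For (B) your argument is correct but takes a longer route than the paper. You re-enter the centering-congruence machinery to manufacture two $(\gamma|\beta)$-related pairs with common second coordinate $x\circ s$ and then invoke the bijectivity in Definition~\ref{6c2sd1}(ii) to force $(x\circ a)\circ s=(x\circ s)\circ a$. The paper avoids this entirely: once Proposition~\ref{6c3sp1}(B) gives $a\circ y=y\circ a$, one has $L_a=R_a$ as permutations of $S$; combining this with the defining identity $(a\circ y)\circ z=a\circ(y\circ z)$ of $\sigma(S)_1$ yields $L_aR_z=R_zL_a$, i.e.\ $R_aR_z=R_zR_a$ for all $z\in S$, so $R_a\in Z(\langle R_s:s\in S\rangle)=Z(G_SS)$. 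This is a two-line computation rather than an appeal to the full Definition~\ref{6c2sd1}. Your reduction of the normalizer statement to $\sigma(S)_1\subseteq Z(G_SS)$ via $N_{G_SS}(G_S)=G_S\,\sigma(S)_1$ matches the paper exactly.
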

\begin{proof} $(A)$ Assume that $\sigma(S)_1=Z(G_SS)$. Since a congruence on $S$ is uniquely determined by its equivalence class at identity element (see \cite[Theorem 2.7, p. 2686]{rpsc}), $\sigma(S)=\{(a \circ x,x)|a \in \sigma(S)_1=Z(G_SS), x \in S\}$. 

Define a relation $\delta$ on $\sigma(S)$ by $\left((x,y),(u,v)\right) \in \delta$ if $x/y=u/v$. We will first prove that $\delta$ is a congruence on $\sigma(S)$. It is easy to see that $\delta$ is an equivalence relation on $\sigma(S)$. Thus, we need to check that it is a right subloop of $\sigma(S) \times \sigma(S)$. 

Note that $((u \circ x,x),(v \circ y,y)) \in \delta$$\Leftrightarrow u=v$. Let $\left((u \circ x_1,x_1),(u \circ t_1,t_1)\right)$ and $\left((v\circ x_2,x_2),(v \circ t_2,t_2)\right)$ be in $\delta$, where $u,v \in \sigma(S)_1$ and $x_1,x_2,t_1,t_2 \in S$. We first check that $(((u \circ x_1)/(v \circ x_2),x_1/x_2),((u \circ t_1)/(v \circ t_2),t_1/t_2)) \in \delta$. For this first observe that \\
$((u \circ x_1)/(v \circ x_2))/(x_1/x_2)$\\
$=(u \circ (x_1/ (v \circ x_2)))/(x_1/x_2)$ (by Lemma \ref{6c3sl1} $(i)$)\\
$=(u \circ (v^{\prime} \circ (x_1/x_2)))/(x_1/x_2)$ (by Lemma \ref{6c3sl1} $(iv)$)\\$=((u \circ v^{\prime}) \circ (x_1/x_2)) / (x_1/x_2)$ (for $u \in \sigma(S)_1$)\\=$u \circ v^{\prime}$

\noindent Similarly $(u \circ t_1/v \circ t_2)/(t_1/t_2)=(u \circ v^{\prime})$. Thus $(((u \circ x_1)/(v \circ x_2),x_1/x_2),((u \circ t_1)/(v \circ t_2),t_1/t_2)) \in \delta$. 

We now check that $(((u \circ x_1) \circ (v \circ x_2),x_1 \circ x_2),((u \circ t_1) \circ (v \circ t_2),t_1 \circ t_2)) \in \delta$. For this observe that \\
$((u \circ x_1) \circ (v \circ x_2))/(x_1/x_2)$\\
$=(((u \circ x_1)\circ v)\circ x_2)/(x_1/x_2)$ (by the assumption)\\
$=((u \circ (x_1 \circ v)) \circ x_2)/(x_1/x_2)$ (for $u \in \sigma(S)_1$)\\ $= ((u \circ (v \circ x_1)) \circ x_2)/ (x_1/x_2)$ (by \ref{6c3sl1e5} of the proof of Lemma \ref{6c3sl1})\\=$(((u \circ v) \circ x_1)\circ x_2)/ (x_1/x_2)$ (for $u \in \sigma(S)_1$)\\=$((u \circ v)\circ (x_1 \circ x_2))/ (x_1/x_2)$ (for $u \circ v \in \sigma(S)_1$)\\= $u \circ v$ 

\noindent Similarly $((u \circ t_1)\circ (v \circ t_2))/(t_1/t_2)=(u \circ v)$. Thus $(((u \circ x_1)\circ (v \circ x_2),x_1/x_2),((u \circ t_1) \circ (v \circ t_2),t_1/t_2)) \in \delta$. 
Hence, $\delta$ is a congruence on $\sigma(S)$. 

Let $\gamma=S \times S$. We now show that $\gamma$ centralizes $\sigma(S)$ by means of $\delta$. For this, we use Proposition \ref{6c2sp2}. Let $x \in S$. Then $\left((x,x),(x,y)\right) \in \delta$ $\Rightarrow 1=x/x=x/y\Rightarrow x=y$. This shows that the condition $(ii)$ of Proposition \ref{6c2sp2} is satisfied. One can trivially observe that the condition $(i)$ of Proposition \ref{6c2sp2} is also satisfied. Hence, by Proposition \ref{6c2sp2}, $\gamma$ centralizes $\sigma(S)$ by means of $\delta$. Since $\zeta(S)$ is the maximal central congruence centralized by $S \times S$, $\sigma(S) \subseteq \zeta(S)$, Thus, by Proposition \ref{6c3sp3} $\sigma(S)_1=\mathcal{Z}(S)$. 

\noindent $(B)$ Assume that $\sigma(S)_1=\mathcal{Z}(S)$. Let $x \in \sigma(S)_1$ and $y,z \in S$. By Proposition \ref{6c3sp2}, $x \in \sigma(S)_1$ is a left non-singular element of $S$. Let $\tilde{S}=\{R_u|u \in S\} $. Let $\langle \tilde{S} \rangle$ be a subgroup of $Sym(S)$ generated by $\tilde{S}$. Then $G_S$ is the stabilizer of $1$ in $\langle \tilde{S} \rangle$. Also $\tilde{S}$ is an NRT of $G_S$ in $\langle \tilde{S} \rangle$ and the map $u\mapsto R_u$ is an isomorphism of right loops. Thus, identifying NRTs $S$ and $\tilde{S}$, we have $\langle \tilde{S} \rangle=G_SS$. 

Since $\sigma(S)_1=\mathcal{Z}(S)$, by Proposition \ref{6c3sp1}$(B)$ if $x \in \sigma(S)_1$, then $L_x=R_x \in \langle \tilde{S} \rangle$. Let $x \in \sigma(S)_1$ and $y,z \in S$. Then, we have $(x \circ y) \circ z=x \circ (y \circ z)$. This implies that $R_z(L_x(y))=L_x((R_z(y))$, that is $R_xR_z=R_zR_x$ for all $z \in S$. Therefore, $R_x \in Z(\langle \tilde{S} \rangle)$, that is $x \in Z(G_SS)$.

Next, since $N_{G_SS}(G_S) \cap S=\sigma(S)_1$, $N_{G_SS}=G_S\sigma(S)_1 \subseteq G_SZ(G_SS)$. Obiously $G_SZ(G_SS) \subseteq N_{G_SS}(G_S)$. Thus $N_{G_SS}(G_S)=G_S Z(G_SS)$.   
\end{proof} 

\section{Quasiprimitivity and Simple Right Loop}\label{6c4s}

A permutation group $G$ on a set $\Omega$ is said to be \textit{quasiprimitive} \index{quasiprimitive} on $\Omega$ if each of its non-trivial normal subgroups is transitive on $\Omega$ (see \cite[p. 227]{ceprg}). A classification of quasiprimitive permutation group is obtained in \cite[Theorem 1, p. 227]{ceprg}. It is obtained in \cite[Theorem 2. p. 474]{phsmth} that a loop is simple if and only if the group generated by left translations and right translations of the loop is quasiprimitive on the loop. In this section, we will give an example of a right loop $S$ which is simple but the group $G_SS$ is not quasiprimitive on $S$. This example corrects \cite[Theorem 4, p. 474]{phsmth} and shows that it is one directional only.
\begin{example}\label{6c4se1} Let $G=D_{18}=<x, y: x^2=y^9=1, xyx=y^8>$ be the group of order 18. Let $H=\left\{1,x\right\}$ and $S=\left\{1, xy,..., xy^8\right\}$. 

The group $G_SS=\langle R_{xy^i}: 0 \leq i \leq 8 \rangle \cong D_{18}$. Let $\theta=R_{xy^2}R_{xy}$. Then $\theta$  is an element of order 9 in $G_SS$. Consider the normal subgroup $N=<\theta^3>=\left\{1, \theta^3, \theta^6 \right\}$ of $G_SS$ of order $3$. Obviously the right action of $N$ on $S$ is not transitive. The partition defined by the action of $N$ on $S$ is given by \[\left\{O_1=\left\{1, xy^3, xy^6 \right\}, O_2=\left\{xy, xy^4, xy^7 \right\}, O_3=\left\{xy^2, xy^5, xy^8 \right\} \right\}.\]

Let $S/N=\left\{O_1, O_2, O_3 \right\}$. Let $\ast$ be the binary relation on $S/N$ defined by $uN \ast vN=(u \circ v)N$. Then $1N=xy^3N$, however $xyN \ast 1N= (xy \circ 1)N=xyN$ and $xyN \ast xy^3N=(xy \circ xy^3)N=xy^2N \neq xyN $. Hence $\ast$ is not a binary operation on $S/N$ as claimed in the first paragraph of the proof of \cite[Theorem 4. p, 474]{phsmth}.

In fact, the right loop $S$ is simple. For, if $R$ is a nontrivial proper congruence on $S$, then the kernel of the epimorphism $\phi : G_SS \rightarrow G_{S/R}S/R$ induced by the quotient homomorphism $\nu : S \rightarrow S/R$ contains a nontrivial element $R_{xy^i}$ (for some $i, 1 \leq i \leq 8$). But, then $ker \phi $ contains all $R_{xy^k}$    for all $k$, $( 1 \leq k \leq 8)$ (since all $R_{xy^k}$'s are conjugate in $G_SS \cong D_{18}$ ). Hence, $ker \phi =G$, a contradiction. 

\end{example}

\end{document}